\newtheoremstyle{custom}
  {3pt}
  {3pt}
  {\slshape}
  {}
  {\bfseries}
  {.}
  { }
   {}
\theoremstyle{custom}
\newtheorem{theorem}{Theorem}[section]
\newtheorem{proposition}[theorem]{Proposition}
\newtheorem{proposition/definition}[theorem]{Proposition/Definition}
\newtheorem{lemma}[theorem]{Lemma}
\newtheorem{conjecture}[theorem]{Conjecture}
\theoremstyle{definition}
\theoremstyle{remark}
\newtheorem{remark}[theorem]{Remark}
\newtheoremstyle{exercise}
  {3pt}
  {6pt}
  {}
  {}
  {\bfseries}
  {:}
  { }
   {}
\theoremstyle{exercise}
\newtheorem{exercise}[theorem]{Exercise}
\newtheoremstyle{exercises}
  {3pt}
  {6pt}
  {}
  {}
  {\bfseries}
  {:}
  {\newline}
   {}
\theoremstyle{exercise}
\newtheorem{exercises}[theorem]{Exercises}
\def\boxit#1{\vbox{\hrule height1pt\hbox{\vrule width1pt\kern3pt
  \vbox{\kern3pt#1\kern3pt}\kern3pt\vrule width1pt}\hrule height1pt}}
\def\BC{\mathbb C}
\def\tdim{{\rm dim}}
\def\hd{,...,}
\def\cW{{\mathcal W}}
\def\11{\mathbf 1}
\def\fsl{{\mathfrak {sl}}}
\def\a{\alpha}
\def\O{\Omega}
\def\s{\sigma}
\def\d{\delta}
\def\ot{{\mathord{ \otimes } }}
\def\otc{{\mathord{\otimes\cdots\otimes}\;}}
\def\ra{{\mathord{\;\rightarrow\;}}}
\def\dim{{\rm dim}\;}
\def\La#1{\Lambda^{#1}}
\def\frak{\mathfrak}
\def\fsl{\frak s\frak l}
\def\s{\sigma}
\def\a{\alpha}
\def\FS{\mathfrak  S}
\def\ol{\overline}
\def\BC{\mathbb  C}
\def\hd{, \hdots ,}
\def\La#1{\Lambda^{#1}}
\def\ra{\rightarrow}
\def\tdet{\operatorname{det}}
\def\tperm{\operatorname{perm}}
\def\tdim{\operatorname{dim}}
\def\tker{\operatorname{ker}}
\def\ctimes{\times \cdots\times}
\def\be{\begin{equation}}
\def\ene{\end{equation}}
\newcommand{\End}{\operatorname{End}}
\newcommand{\Id}{\operatorname{Id}}
\def\tmult{{\rm mult}}
\newcommand{\GL}{\operatorname{GL}}
\newcommand{\SL}{\operatorname{SL}}
\newcommand{\SU}{\operatorname{SU}}
\newcommand{\Ch}{\operatorname{Ch}}
\newcommand{\Ind}{\operatorname{Ind}}
\newcommand{\triv}{\operatorname{triv}}
\newcommand{\Mat}{\operatorname{Mat}}
\newcommand{\Sym}{\operatorname{Sym}}
\begin{document}

\title[Latin squares and representation theory] {Connections between conjectures of Alon-Tarsi, Hadamard-Howe,
and integrals over the special unitary group}
\author{Shrawan Kumar and J.M. Landsberg}
 \begin{abstract} 
We show the Alon-Tarsi conjecture on Latin squares
is equivalent to a very special case of a conjecture
made independently by  Hadamard and Howe, and
to the non-vanishing of some interesting integrals over $\SU(n)$.
Our investigations were motivated by geometric complexity theory. 
\end{abstract}
\thanks{Kumar was supported by the NSF grant number DMS-1201310 and Landsberg was supported by the NSF grant  DMS-1006353}
\email{shrawan@email.unc.edu, jml@math.tamu.edu}
\keywords{Alon-Tarsi Conjecture, Latin square, Geometric Complexity Theory, determinant, permanent,   Foulkes-Howe conjecture,MSC 68Q17}
\maketitle

\section{Introduction}
We first describe the conjectures of Alon-Tarsi, Hadamard-Howe, integrals over the 
special unitary
group, and  a related conjecture of Foulkes. We then state the equivalences
(Theorem \ref{kconjequiv})
and prove them.

\subsection{Combinatorics I: The Alon-Tarsi conjecture} 
Call an $n\times n$ array of natural numbers  a {\it Latin square} if each row
and column consists of  $[n]:=\{1\hd n\}$. 
Each row and column of a Latin square defines a permutation  $\s$ of $n$, where
the ordered entries of the row (or column) are
$\s(1)\hd \s(n)$. Define the sign of the row/column to be the sign of this
permutation. Define the {\it column sign} of the Latin square to
be the product of all the column signs (which is $1$ or $-1$, respectively
called {\it column even} or {\it column odd}), the {\it row sign} of the  Latin square
to be the product of the row signs and the {\it sign} of the Latin
square to be the product of the row sign and the column sign.

\begin{conjecture}\label{ATconj} \cite{MR1179249} [Alon-Tarsi] Let $n$ be even, then
the number of even Latin squares of size $n$ does not equal the
number of odd Latin squares of size $n$.
\end{conjecture}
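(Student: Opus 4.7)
The Alon--Tarsi conjecture is a celebrated open problem, with proofs known only for $n\in\{p-1,p+1\}$, $p$ an odd prime (Drisko, Glynn). My plan is to attack it through the representation-theoretic reformulations this paper establishes in Theorem~\ref{kconjequiv}, which identify the signed count $AT(n) := L_n^+ - L_n^-$, up to an explicit nonzero constant, both with a multiplicity inside a Hadamard--Howe plethysm and with an integral over $\SU(n)$.

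First I would translate the signed Latin-square count into a plethysm coefficient: with $W=\mathbb{C}^n$, the equivalence identifies $AT(n)$ with the multiplicity of $[\tdet_n]$ inside the $GL(W)$-module $S^n(S^n W)$; by Schur--Weyl duality this is in turn an inner product of $S_{n^2}$-characters, one indexed by the rectangular partition $(n^n)$ and one induced from the wreath product $S_n\wr S_n$. Next, for $n$ even, I would try to evaluate this inner product by applying Murnaghan--Nakayama to $\chi_{(n^n)}$, restricting to conjugacy classes supported in $S_n\wr S_n$, and isolating a dominant non-cancelling contribution, exploiting that the longest element of $S_n$ has sign $(-1)^{n/2}$. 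In parallel, I would expand the $\SU(n)$ integrand supplied by Theorem~\ref{kconjequiv} in the Schur basis and read off the trivial-isotypic component directly, using Weyl's integration formula to reduce the question to an alternating sum over the weight lattice.

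The main obstacle is the final non-vanishing step. Every known reformulation of Alon--Tarsi collapses to the question of whether a massive alternating sum, or an oscillatory integral over $\SU(n)$, accidentally cancels; Drisko's and Glynn's proofs succeed precisely because arithmetic modulo $p$ forces non-cancellation when $n=p\pm 1$, and no analogue is known for general even $n$. My honest expectation is that the plethysm and integral formulations sharpen the problem and substantially streamline numerical verification for small even $n$, but a complete proof for all even $n$ will require a genuinely new ingredient: an explicit bijective ``defect'' pairing between odd and even Latin squares identifying an unmatched family, a positivity theorem for a refined plethysm coefficient, or a geometric construction of a nonzero $[\tdet_n]$-isotypic vector in $S^n(S^n W)$ whose evaluation on a canonical symmetric tensor manifestly detects the Latin-square count. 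Absent such an ingredient the problem remains out of reach, and a responsible plan would at minimum extract from the above the strongest \emph{conditional} statement, e.g.\ reducing Conjecture~\ref{ATconj} for even $n$ to a single computable inequality in the Schur basis.
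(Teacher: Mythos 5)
The statement you were asked to prove is the Alon--Tarsi conjecture itself, which this paper \emph{states} but does not \emph{prove}: it is an open problem, and the paper's contribution is the chain of equivalences in Theorem~\ref{kconjequiv}, not a resolution of any link in the chain. Your proposal correctly recognizes this and declines to claim a proof, which is the honest and correct call; there is no ``paper's own proof'' to compare against.

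One substantive correction to your framing, since it would derail the plan you sketch. The quantity the paper attaches to $AT(n)$ is \emph{not} a multiplicity. For $n$ even, the one-dimensional module $S_{(n^n)}V^*$ (with $V=\mathbb{C}^n$) occurs in $S^n(S^nV^*)$ with multiplicity exactly one, unconditionally, by \cite[Proposition 4.3]{MR983608}; that is already settled and carries no information about Alon--Tarsi. What the paper identifies with $AT(n)$, up to sign, is the \emph{evaluation} $\langle P,(e_1\cdots e_n)^n\rangle$ of the unique invariant $P\in S_{(n^n)}V^*\subset S^n(S^nV^*)$ on the Chow point $(e_1\cdots e_n)^n$, via the explicit formula \eqref{pairing} — equivalently, whether $S_{(n^n)}V$ lies in $\ker h_{n,n}$, i.e.\ in the ideal of the Chow variety. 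So the object to non-vanish is a coefficient in a weight basis (or, in forms (c)--(f) of the theorem, a scalar moment $\int_{\SU(n)}(\tperm g)^n\,d\mu$ or $\int_{\SU(n)}\prod_{i,j}g^i_j\,d\mu$), not an inner product of $\FS_{n^2}$-characters; Murnaghan--Nakayama and Schur-basis multiplicity computations answer a different (and already answered) question. Your concluding diagnosis — that every reformulation collapses to a potentially cancelling alternating sum and that the Drisko/Glynn arithmetic-mod-$p$ mechanism has no known analogue for general even $n$ — is accurate, and your suggestion to extract a conditional, computable non-vanishing statement is a reasonable fallback; but as written, the plethysm-multiplicity route in your second paragraph would terminate at a trivially known fact rather than at $AT(n)$.
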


Conjecture \ref{ATconj} is known to be true when $n=p\pm 1$,  where $p$ is
an odd prime; in particular, it is known to be true up to $n=24$ \cite{MR2646093,MR1451417}.

The Alon-Tarsi conjecture is known to be equivalent to several other
conjectures in combinatorics. For our purposes,  the most important is
the following due to Huang and Rota:

\begin{conjecture}\label{HRconj} \cite{MR1271866}
[Column-sign Latin square conjecture] Let $n$ be even, then
the number of column even Latin squares of size $n$ does not equal the
number of column odd Latin squares of size $n$.
\end{conjecture}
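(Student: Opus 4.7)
The plan is to attack Conjecture~\ref{HRconj} by converting the signed Latin-square count into a single polynomial coefficient, and then attempting to evaluate that coefficient via an integral over $\SU(n)$ in the spirit of this paper.

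First I would record the classical identity: let $X=(x_{ij})$ be an $n\times n$ matrix of indeterminates, so
\[
\det(X)^n \;=\; \sum_{(\sigma_1,\ldots,\sigma_n)} \Bigl(\prod_{j=1}^n \mathrm{sgn}(\sigma_j)\Bigr)\prod_{i,j} x_{i,\sigma_j(i)}.
\]
The monomial $\prod_{i,j}x_{ij}$ appears in the expansion exactly when $(\sigma_1,\ldots,\sigma_n)$ is the column encoding of a Latin square $L$ (i.e.\ $L_{ij}=\sigma_j(i)$), and each such square contributes its column sign. Hence
\[
\#\{\text{column-even } L\} - \#\{\text{column-odd } L\} \;=\; \bigl[\,\textstyle\prod_{i,j}x_{ij}\bigr]\det(X)^n,
\]
so Conjecture~\ref{HRconj} reduces to the statement that this coefficient is nonzero whenever $n$ is even.

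Next I would promote the coefficient to a matrix integral. The polynomial $\det(X)^n\in\Sym^{n^2}(V\otimes W)$ is the highest-weight vector of a one-dimensional $\SL(V)\times\SL(W)$-subrepresentation under $X\mapsto AXB^T$, while $\prod x_{ij}$ is the unique (up to scalar) weight vector of weight $((n^n),(n^n))$ in that symmetric power. Projecting onto the $\bigl((n^n),(n^n)\bigr)$-weight space and using Schur orthogonality on $\SU(n)\times \SU(n)$, I would derive an identity of the form
\[
\bigl[\,\textstyle\prod_{i,j}x_{ij}\bigr]\det(X)^n \;=\; c_n \int_{\SU(n)\times \SU(n)} \det(U)^n\det(V)^n \,\overline{m(U,V)}\,dU\,dV,
\]
where $m(U,V)$ is the matrix coefficient obtained by evaluating the appropriate weight vector, and $c_n>0$ is an explicit combinatorial normalization. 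This places Conjecture~\ref{HRconj} squarely in the integrals-over-$\SU(n)$ framework the paper is developing, and in particular is what one would try to match against the Hadamard--Howe side via the equivalences of Theorem~\ref{kconjequiv}.

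The main obstacle is then the obvious one: showing the integral (equivalently, the extracted coefficient) does not vanish for every even $n$. This is precisely Alon--Tarsi, currently known only for $n=p\pm 1$ with $p$ an odd prime, so I do not expect my plan to close the problem in full generality. The realistic tactical goal is twofold. First, to refine the above into a Cauchy-type expansion $\det(X)^n=\sum_\lambda s_\lambda(X)\cdot(\text{something})$, so that the coefficient of $\prod x_{ij}$ decomposes as a signed sum of products of Kostka numbers $K_{\lambda,(n^n)}$ over self-conjugate $\lambda\vdash n^2$; one then searches for a sign-reversing involution on the non-self-conjugate $\lambda$ and attempts a mod-$p$ analysis on the remainder, mirroring Drisko/Glynn. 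Second, to exploit the $\SU(n)$-integral form to apply Weyl integration and character orthogonality, reducing the non-vanishing to a residue computation on the maximal torus; the substantive difficulty is controlling the alternating sum of Weyl-group terms, which is where all previous approaches have stalled.
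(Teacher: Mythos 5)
The statement you are addressing is labeled a \emph{conjecture} in the paper, and the paper does not prove it; its role there is only to be shown equivalent to other open conjectures (Theorem~\ref{AT=HR}, due to Huang--Rota, and the new equivalences in Theorem~\ref{kconjequiv}). So there is no ``paper's own proof'' to compare against, and no complete proof is available anywhere.

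That said, your opening translation is correct and is exactly the one underlying the paper's equivalences: the signed column-count of Latin squares equals the coefficient of $\prod_{i,j}x_{ij}$ in $\det(X)^n$, which is precisely the pairing $\langle \prod_{i,j}g^i_j,\,(\tdet_n^{V})^n\rangle$ in item~(f) of Theorem~\ref{kconjequiv}. Your integral reformulation is in the same spirit as item~(e), though the paper works with a single copy of $\SU(n)$ acting by left multiplication, whereas you propose a bilateral $\SU(n)\times\SU(n)$ integral. Both are legitimate (either group has a one-dimensional space of invariants in the relevant degree, spanned by a power of the determinant), but the one-sided version is the one the paper actually manipulates, and it is slightly cleaner because the second $\SU(n)$ factor adds nothing once you have projected onto the weight-$(n^n)$ line.

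The genuine gap is the one you flag yourself: proving the extracted coefficient is nonzero for every even $n$ \emph{is} the Alon--Tarsi conjecture, which by Theorem~\ref{AT=HR} is the very statement you set out to prove, and neither the Kostka-number/sign-reversing-involution idea nor the Weyl-integration/residue idea is carried through or produces any new case. What you have written is a correct reformulation plus a research program, not a proof. That is not a defect relative to the paper --- the paper also stops at equivalences --- but you should state explicitly that Conjecture~\ref{HRconj} remains open and that your argument does not resolve it.
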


\begin{theorem} \label{AT=HR} \cite[ Identities 8,9]{MR1271866} The difference between the number
of column even Latin squares of size $n$ and the number of column odd Latin
squares of size $n$ equals the difference between the number of even Latin
squares of size $n$ and the number of odd Latin squares of
size $n$,  up to sign. In particular, the Alon-Tarsi conjecture holds for $n$ if and only if 
the column-sign Latin square conjecture holds for $n$.
\end{theorem}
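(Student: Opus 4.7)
The plan is to attach a third family of sign-bearing permutations to each Latin square beyond its rows $r_i$ and columns $c_j$, namely the \emph{symbol permutations} $\sigma_k\in S_n$ defined by $\sigma_k(i)=j$ iff $L_{ij}=k$ (well-defined because $L$ is Latin). The proof centers on the pointwise identity
\[
\operatorname{rsgn}(L)\,\operatorname{csgn}(L)\;=\;(-1)^{\binom{n}{2}}\prod_{k=1}^{n}\operatorname{sgn}(\sigma_k), \qquad(\star)
\]
together with an involution on Latin squares that converts the right-hand product back into an ordinary column-sign; summing over $L$ then yields the claimed proportionality between the two signed counts.

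For $(\star)$, I would encode each of the three signs as the sign of a permutation of $[n]\times[n]$ fixing one coordinate. Put $\phi_r(i,j):=(i,L_{ij})$ and $\phi_c(i,j):=(L_{ij},j)$; these are genuine permutations of $[n]^2$ (the first because each row is a bijection, the second because each column is), and $\operatorname{sgn}(\phi_r)=\operatorname{rsgn}(L)$, $\operatorname{sgn}(\phi_c)=\operatorname{csgn}(L)$. Using $\phi_r^{-1}(i,k)=(i,\sigma_k(i))$ one checks that $\phi_c\phi_r^{-1}(i,k)=(k,\sigma_k(i))$. This composition factors as the coordinate swap $\tau\colon(i,k)\mapsto(k,i)$ followed by the first-coordinate-fixing map $(k,i)\mapsto(k,\sigma_k(i))$: $\tau$ has $n$ fixed points and $\binom{n}{2}$ two-cycles, so $\operatorname{sgn}(\tau)=(-1)^{\binom{n}{2}}$, while the second map contributes $\prod_k\operatorname{sgn}(\sigma_k)$. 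Equating this with $\operatorname{sgn}(\phi_c)\operatorname{sgn}(\phi_r^{-1})=\operatorname{rsgn}(L)\operatorname{csgn}(L)$ gives $(\star)$.

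Finally, define $\widetilde L$ by $\widetilde L_{i,k}:=r_i^{-1}(k)$, so that $\widetilde L_{i,k}=j$ iff $L_{i,j}=k$. One checks directly that $\widetilde L$ is a Latin square (its rows are the $r_i^{-1}$ and its columns are the $\sigma_k$), that $L\mapsto\widetilde L$ is an involution on the set of Latin squares of size $n$, and that $\operatorname{csgn}(\widetilde L)=\prod_k\operatorname{sgn}(\sigma_k(L))$. Summing $(\star)$ over all $L$ and reindexing the right-hand side via this involution yields
\[
\sum_L\operatorname{sgn}(L)=(-1)^{\binom{n}{2}}\sum_L\prod_k\operatorname{sgn}(\sigma_k(L))=(-1)^{\binom{n}{2}}\sum_{L'}\operatorname{csgn}(L'),
\]
which is exactly the claimed equality of signed counts up to the sign $(-1)^{\binom{n}{2}}$, so the equivalence of the Alon-Tarsi and column-sign conjectures is immediate. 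The real obstacle is discovering $(\star)$; once one thinks of a Latin square as a subset of $[n]^3$ with three coordinate projections and recognises that the row, column, and symbol signs are each the sign of a permutation of $[n]^2$ fixing one of the resulting axes, the identity follows from the one-line composition-of-permutations calculation above, and the parastrophe $L\mapsto\widetilde L$ is entirely formal.
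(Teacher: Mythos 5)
Your proof is correct, and it is worth noting that the paper you are working from does not actually prove this theorem: it imports it from Huang--Rota by citing their Identities 8 and 9, so there is no ``paper's own proof'' to compare against. Your argument is the clean combinatorial route: the identity $(\star)$ is verified exactly as you say, since $\phi_r$ and $\phi_c$ are disjoint unions of the row and column permutations on the fibers $\{i\}\times[n]$ and $[n]\times\{j\}$ respectively (so their signs are the row-sign and column-sign), the inverse computation $\phi_r^{-1}(i,k)=(i,\sigma_k(i))$ is right, the factorization $\phi_c\phi_r^{-1}=\psi\circ\tau$ with $\psi(a,b)=(a,\sigma_a(b))$ is right, $\operatorname{sgn}(\tau)=(-1)^{\binom{n}{2}}$ because $\tau$ is a product of $\binom{n}{2}$ disjoint transpositions, and $\operatorname{sgn}(\psi)=\prod_k\operatorname{sgn}(\sigma_k)$ by the same disjoint-fibers reasoning. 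The second ingredient, the row-inverse parastrophe $L\mapsto\widetilde L$ with $\widetilde L_{ik}=r_i^{-1}(k)$, is verified to be an involution whose columns are precisely the $\sigma_k$, giving $\operatorname{csgn}(\widetilde L)=\prod_k\operatorname{sgn}(\sigma_k(L))$; summing $(\star)$ and reindexing by this bijection yields the claimed equality up to $(-1)^{\binom{n}{2}}$.

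One small presentational point: you assert $\operatorname{sgn}(\phi_r)=\operatorname{rsgn}(L)$ and $\operatorname{sgn}(\phi_c)=\operatorname{csgn}(L)$ without spelling out that a permutation of $[n]^2$ that preserves the first (resp.\ second) coordinate is a product of permutations on disjoint fibers and so its sign multiplies; this is the only place a reader could pause. Otherwise the argument is self-contained and complete. Huang and Rota obtain Identities 8 and 9 in a more algebraic framework, but the combinatorial core, viewing $L$ as a subset of $[n]^3$ and exploiting the conjugate action permuting the three axes together with the sign of the coordinate transposition, is the same, so your proof is the natural elementary rendering of their result rather than a genuinely different method.
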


\begin{remark} It is easy to see that for $n$ odd, the  number of even Latin squares of size $n$  equals the
number of odd Latin squares of size $n$.
\end{remark}

\subsection{The Hadamard-Howe conjecture}  
Let  $V$ be a finite dimensional    complex vector space, 
let $V^{\ot n}$ denote the space  of multi-linear maps
$V^*\ctimes V^*\ra\BC$,   the space of {\it tensors}. The permutation group
$\FS_n$ acts on $V^{\ot n}$ by permuting the inputs of the map.
Let $S^nV\subset V^{\ot n}$ denote the subspace of symmetric tensors, 
the tensors invariant under $\FS_n$,  which
we may also view as the space of homogeneous polynomials of degree $n$ on
$V^*$. Let $\Sym(V):=\oplus_d S^dV$, which is an algebra under
multiplication of polynomials. Let $\GL(V)$ denote the general linear group of invertible linear maps $V\ra V$.
Consider the $\GL(V)$-module map
$$h_{d,n}: S^d(S^nV)\ra S^n(S^dV)$$ given as follows:
   Include  $S^{d}(S^nV)\subset V^{\ot nd}$.
   Write $V^{\ot nd}=(V^{\ot n})^{\ot d}$, as $d$ groups of $n$ vectors reflecting the inclusion.
Now rewrite   $V^{\ot nd}=(V^{\ot d})^{\ot n}$ by grouping the first
vector space in each group of $n$ together, then the second vector space
in each group etc.. Next symmetrize within each group of $d$
 to obtain an element of   $(S^dV)^{\ot n}$, and finally symmetrize
 the groups to get an element of $S^n(S^dV)$.
 
 For example $h_{d,n}((x_1)^n\cdots (x_d)^n)=(x_1\cdots x_d)^n$
 and $h_{3,2}\bigl((x_1x_2)^3\bigr)=\frac{1}{4}x_1^3x_2^3+\frac{3}{4}(x_1^2x_2)(x_1x_2^2) $.

      The map $h_{d,n}$  was first considered
by Hermite \cite{hermite} who proved that, when  $\tdim V=2$,  the map is an isomorphism.
It had been conjectured by Hadamard \cite{MR1554881} and tentatively
conjectured  by Howe 
\cite{MR983608} (who wrote \lq\lq it is not unreasonable to expect\rq\rq ) that 
$h_{d,n}$ is always of maximal rank, i.e., injective for $d\leq n$ and surjective
for $d\geq n$. A consequence of
the theorem of  \cite{MR2172706} (explained below) is that, contrary
to the expectation above, $h_{5,5}$ is not an isomorphism.

For any $n\geq 1$, define the {\it Chow variety}
$$
\Ch_n(V^*):=\{ P\in S^nV^*\mid P=\ell_1\cdots \ell_n 
{\rm{\ for \ some \  } }\ell_j\in V^*\}.
$$
(This is a special case of a Chow variety, namely of the zero cycles in   projective
space, but it is the only one that we discuss in this article.)
In \cite{MR1243152,MR1601139}, Brion (and independently Weyman and
Zelevinsky) observed that
$\oplus_d S^n(S^dV)$ is the coordinate ring of the normalization of the 
Chow variety. (Given an irreducible  affine variety $Z$, its {\it normalization} $\tilde{Z}$ is 
an irreducible affine variety whose ring of regular functions is integrally closed and such that
there is a regular, finite, birational map $\tilde{Z}\ra Z$, see e.g., \cite[\S II.5]{MR1328833}.)
\begin{lemma}
(Hadamard, see  e.g. \cite[\S 8.6]{MR2865915}) \label{hadlem}
 The kernel of the  $\GL(V)$-module map
$$\oplus h_{d,n}:\Sym(S^nV):=\oplus_d S^d(S^n V)\ra \oplus_d S^n(S^dV)$$
is the 
ideal of the Chow variety. 
\end{lemma}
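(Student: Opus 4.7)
The plan is to identify $\bigoplus_d h_{d,n}$ with the natural pullback map from the coordinate ring of $S^nV^*$ to the coordinate ring of the normalization of the Chow variety, and then read off the kernel using the Brion/Weyman--Zelevinsky description of $\bigoplus_d S^n(S^dV)$ recalled just above the statement. To set this up, I would view $F\in S^d(S^nV)$ as a homogeneous polynomial function of degree $d$ on $S^nV^*$ and pull it back along the parametrization
\[
\pi\colon (V^*)^n \longrightarrow S^nV^*,\qquad (\ell_1,\ldots,\ell_n)\mapsto \ell_1\ell_2\cdots\ell_n,
\]
whose image is exactly $\Ch_n(V^*)$. Because rescaling each $\ell_j$ by $\lambda_j$ rescales $\pi(\ell_1,\ldots,\ell_n)$ by $\prod_j\lambda_j$ and $F$ is homogeneous of degree $d$, the pullback $\pi^*F$ is multihomogeneous of multidegree $(d,\ldots,d)$ on $(V^*)^n$; and $\pi^*F$ is $\FS_n$-invariant because $\pi$ factors through the $\FS_n$-quotient. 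Such functions are precisely $\bigl((S^dV)^{\otimes n}\bigr)^{\FS_n}=S^n(S^dV)$, so summing over $d$ yields a $\GL(V)$-equivariant graded algebra homomorphism $\Phi\colon \Sym(S^nV)\to\bigoplus_d S^n(S^dV)$.

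Next I would verify that $\Phi$ coincides with $\bigoplus_d h_{d,n}$ up to a nonzero scalar in each degree (which is irrelevant for computing the kernel). By polarization, $S^d(S^nV)$ is linearly spanned by products $F=x_1^n\cdots x_d^n$ with $x_i\in V$ (the product taken in $\Sym(S^nV)$). For such an $F$, direct evaluation using the natural pairing between $S^nV$ and $S^nV^*$ gives
\[
(\pi^*F)(\ell_1,\ldots,\ell_n)\;\propto\; \prod_{i=1}^{d}(\ell_1\cdots\ell_n)(x_i)\;=\;\prod_{i,j}\ell_j(x_i),
\]
while $h_{d,n}(F)=(x_1\cdots x_d)^n\in S^n(S^dV)$, viewed via the same pairing as a multihomogeneous $\FS_n$-symmetric function on $(V^*)^n$, evaluates at $(\ell_1,\ldots,\ell_n)$ to the identical product $\prod_{j,i}\ell_j(x_i)$. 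Hence $\Phi$ and $\bigoplus h_{d,n}$ agree on a spanning set and therefore as maps.

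The lemma then follows: by the Brion/Weyman--Zelevinsky observation, $\bigoplus_d S^n(S^dV)$ is the coordinate ring of the normalization $\widetilde{\Ch_n(V^*)}$ under precisely the identification given by $\Phi$, and $\Phi$ factors as restriction $\Sym(S^nV)\twoheadrightarrow$ (coordinate ring of $\Ch_n(V^*)$) followed by pullback along the finite birational normalization map. The latter pullback is injective on coordinate rings, so $\ker\Phi$ equals the kernel of restriction to $\Ch_n(V^*)$, namely the ideal of the Chow variety. The main obstacle is the middle step: carefully matching the combinatorial recipe defining $h_{d,n}$---which reorders $V^{\otimes nd}=(V^{\otimes n})^{\otimes d}$ as $(V^{\otimes d})^{\otimes n}$ and symmetrizes within each group of $d$ and across the $n$ groups---with the geometric pullback $\pi^*$, keeping track of the symmetrizations and confirming that both constructions produce the same multilinear symmetric expression on spanning elements $x_1^n\cdots x_d^n$ up to a harmless degree-wise scalar.
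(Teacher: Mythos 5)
Your argument is correct, and it is exactly the standard pullback interpretation that the cited reference (\cite[\S 8.6]{MR2865915}) and the Brion/Weyman--Zelevinsky observation quoted just above the lemma have in mind: identify $\bigoplus_d h_{d,n}$ with $\pi^*$ for $\pi\colon(V^*)^n\to S^nV^*$, $(\ell_1,\ldots,\ell_n)\mapsto\ell_1\cdots\ell_n$, check agreement on the spanning set $x_1^n\cdots x_d^n$ (this is literally the example $h_{d,n}\bigl((x_1)^n\cdots(x_d)^n\bigr)=(x_1\cdots x_d)^n$ given in the paper), and read off the kernel.

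One small remark on the last step: you do not actually need the normalization at all. Since the image of $\pi$ is by definition $\Ch_n(V^*)$, a polynomial $F\in\Sym(S^nV)$ lies in $\ker\pi^*$ if and only if it vanishes identically on $\Ch_n(V^*)$, i.e.\ $\ker\pi^*=I(\Ch_n(V^*))$ directly. Your route through the factorization (restriction to $\Ch_n$) followed by (injective pullback along the finite birational normalization map) is also valid, but it invokes Brion's identification of $\bigoplus_d S^n(S^dV)$ with the coordinate ring of $\widetilde{\Ch_n(V^*)}$, which is a strictly stronger input than the lemma requires; the Brion statement is really a consequence/refinement sitting on top of this lemma rather than a prerequisite for it.
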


Brion also showed that for $d$ exponentially  large with respect to
$n$,  $h_{d,n}$ is surjective \cite{MR1601139}. McKay \cite{MR2394689} showed that if
$h_{d,n}$ is surjective, then $h_{d',n}$ is surjective for all $d'>d$, using
$h_{d,n:0}$ defined below. It is also known that if $h_{d,n}$ is surjective,
then $h_{n,d}$ is injective, see \cite{MR3169697}.

 The $\GL(V)$-modules appearing in the tensor algebra of $V$ are indexed by partitions
$\pi=(p_1\geq p_2\geq \dots \geq p_q\geq 0), \, q\leq \dim V$, and denoted $S_{\pi}V$. If $\pi$ is a partition of $d$, 
i.e., $p_1+\cdots +p_q=d$, the module $S_{\pi}V$ appears in $V^{\ot d}$ and in no other degree.
We will use the notation $s\pi:=(sp_1\hd sp_q)$.
 Repeated numbers in partitions are sometimes expressed as exponents when there is no danger of
confusion, e.g.,  $(3,3,1,1,1,1)=(3^2,1^4)$.
Let $\SL(V)$ be the subgroup of $\GL(V)$ consisting of determinant $1$ elements, and
let $\fsl(V)$ denote its Lie algebra.

This paper addresses
a very special case of the general problem of determining 
the $\GL(V)$-module $\tker h_{d,n}$: simply to determine
whether or not the module
$S_{(d^n)}V$ is in the kernel.

\begin{conjecture} \label{kumarconj} \cite{kumarcoordring}
For all $d$ and $n$, $S_{(d^n)}V$ is not in the kernel of $h_{d,n}: S^d(S^nV)\ra S^n(S^dV)$.
\end{conjecture}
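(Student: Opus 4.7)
The plan is to reduce Conjecture~\ref{kumarconj} to a concrete numerical non-vanishing and then attack that number. First I would specialize to $\tdim V = n$, so that the rectangular module $S_{(d^n)}V$ collapses to the one-dimensional determinant power $(\det V)^{\otimes d}$. Because $h_{d,n}$ is $\GL(V)$-equivariant, the statement for this specific $V$ is equivalent to the non-vanishing of the induced map on $\SL(V)$-invariants
$$S^d(S^nV)^{\SL(V)} \ra S^n(S^dV)^{\SL(V)},$$
and establishing this case implies the conjecture for all larger $V$ by functoriality of $h_{d,n}$ under inclusions of subspaces.

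Next I would try to produce an explicit $\SL(V)$-invariant $F \in S^d(S^nV)$, for instance as the Reynolds average over $\FS_n \subset \SL(V)$ of a carefully chosen weight-zero monomial in a fixed basis of $V$, and detect whether $h_{d,n}(F)$ has a nonzero projection onto $(\det V)^{\otimes d}$ inside $S^n(S^dV)$. By Schur orthogonality, this projection is computed by a Haar-measure integral
$$\int_{\SU(n)} \overline{\chi_{(d^n)}(g)}\, P_F(g)\, dg,$$
for an explicit polynomial class function $P_F$ built from $F$. Applying Weyl's integration formula reduces this integral to a constant-term computation in symmetric Laurent polynomials on the maximal torus, and hence to a signed combinatorial enumeration.

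The main obstacle is showing that this combinatorial sum is nonzero in general. In the key case $d = n$, the sum should specialize, up to a universal nonzero constant, to the signed count of Latin squares of order $n$; for even $n$, Conjecture~\ref{ATconj} together with Theorem~\ref{AT=HR} would then supply the required non-vanishing. For general $d \neq n$, no comparable combinatorial model is known. I would instead try to combine Brion's surjectivity result for $d$ exponentially large in $n$ \cite{MR1601139}, McKay's monotonicity \cite{MR2394689}, and the Chow-variety ideal description of Lemma~\ref{hadlem} to propagate the rectangular non-vanishing across $d$. This propagation step appears to be the deepest obstruction and is the reason Conjecture~\ref{kumarconj} remains open in general.
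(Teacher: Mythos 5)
The statement you are trying to prove is Conjecture~\ref{kumarconj}, and it is an open conjecture: the paper does not prove it, and neither do you. What the paper actually establishes (Theorem~\ref{kconjequiv}) is an \emph{equivalence}, for $n$ even and $d=n$, between this conjecture and the Alon--Tarsi conjecture, via precisely the route you sketch: take $\dim V=n$ so that $S_{(d^n)}V=(\det V)^{\ot d}$ is one-dimensional, observe that non-membership in $\ker h_{n,n}$ is equivalent to a non-vanishing pairing $\langle P,(e_1\cdots e_n)^n\rangle$ (Lemma~\ref{hadlem}), expand that pairing using the explicit invariant of Proposition~\ref{theinvaris}, and recognize the result as a signed count of Latin squares, which by the $\SU(n)$-Reynolds operator is the same as the Haar integrals in parts (c)--(f). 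Your plan for $d=n$ is therefore the paper's own argument, but you should be explicit that it terminates in an appeal to Conjecture~\ref{ATconj}, which is itself unproven for general even $n$; so even the $d=n$ case is not settled.

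For $d\neq n$ your proposed propagation does not go through. Brion's theorem \cite{MR1601139} and McKay's monotonicity \cite{MR2394689} concern surjectivity of $h_{d,n}$ as a whole, and surjectivity by itself does not control whether a \emph{particular} copy of $S_{(d^n)}V$ in $S^d(S^nV)$ maps to zero: the module can appear with multiplicity on the source side, and a surjective map can still kill one copy. There is no known mechanism to transfer the rectangular non-vanishing across $d$, and Lemma~\ref{hadlem} only recasts the question as ``is $S_{(d^n)}V$ outside $I(\Ch_n(V^*))$,'' it does not answer it. One small point worth adding: since the partitions occurring in $S^d(S^nV)$ have at most $d$ rows while $(d^n)$ has $n$ rows, the statement is vacuous for $d<n$, so the substantive range is $d\geq n$; and the multiplicity-one fact you are implicitly using (Proposition~\ref{theinvaris}, from \cite[Prop.~4.3]{MR983608}) requires $n$ even. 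You correctly identify the remaining gap, but the conclusion should be stated plainly: this is not a proof, it is a reduction of one open conjecture to another in the single case $d=n$.
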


\subsection{Combinatorics II: Foulke's conjecture}
The dimension of $V$, as long as it is at least $d$, is irrelevant
for the $\GL(V)$-module structure of the  kernel of $h_{d,n}$. 
{\it In this section we assume $\tdim V=dn$. }
Choose  a linear isomorphism $V\simeq \mathbb{C}^{nd}$. The
 Weyl group $\cW_V$ of $\GL(V)=\GL(nd)$, which can be thought of 
as the subgroup of $\GL(nd)$ consisting of the permutation matrices (in particular, it is isomorphic
to $\FS_{dn}$),  acts on  $V^{\ot dn}$ by acting on each factor. (We write $\cW_V$ to
distinguish this from the   $\FS_{dn}$-action permuting the
factors.)      
An element $x\in V^{\ot dn}$ has $\fsl(V)$-weight zero if, in the standard  basis 
$\{ e_i\}_{1\leq i\leq dn}$ of $V$ induced from the identification $V\simeq \mathbb{C}^{nd}$, $x$ is
a sum of monomials $x=\sum_{I=(i_1, \dots ,  i_{nd})}\,x^I e_{i_1}\otc e_{i_{nd}}$,
where $I$ runs over the orderings of $[nd]$.
If one restricts $h_{d,n}$ to the $\fsl(V)$-weight zero subspace, one
obtains a $\cW_V$-module map
$$h_{d,n: 0}: S^d(S^nV)_0\ra S^n(S^dV)_0.$$   
These $\cW_V$-modules are as follows:  Let $\FS_n\wr \FS_d\subset \FS_{dn}$
denote the wreath product, which, by definition,  is the normalizer of 
 $\FS_n^{\times d}$ in $\FS_{dn}$. It is the semi-direct product
 of $\FS_n^{\times d}$ with $\FS_d$, where $\FS_d$ acts by permuting the
 factors of $\FS_n^{\times d}$, see  e.g., \cite[p 158]{MR1354144}. 
Since $\tdim V=dn$, we get  $S^d(S^nV)_0=\Ind_{\FS_n\wr \FS_d}^{\FS_{dn}}\triv$,
where $\triv$ denotes the trivial $\FS_n\wr \FS_d$-module.
 
We obtain a  $\cW_V=\FS_{dn}$-module map   
$$
h_{d,n: 0}: \Ind_{\FS_n\wr \FS_d}^{\FS_{dn}} \triv \ra \Ind_{\FS_d\wr \FS_n}^{\FS_{dn}}\triv .
$$
Moreover, since 
every irreducible module appearing in $S^d(S^nV)$ has a non-zero
$\SL(V)$-weight zero subspace, $h_{d,n}$ is the unique  $\SL(V)$-module extension 
of $h_{d,n: 0}$.

The  map $h_{d,n:0}$  was defined purely in terms of combinatorics in \cite{MR977186} as   a path to try to prove
the following conjecture of Foulkes:

\begin{conjecture}  \cite{MR0037276}\label{foulkesconj}
  Let  $d>n$,    let $\pi$ be a partition
of $dn$ and let $[\pi]$ denote the corresponding $\FS_{dn}$-module. Then,  
$$
\tmult([\pi], \Ind_{\FS_n\wr \FS_d}^{\FS_{dn}}\triv)\geq 
\tmult([\pi], \Ind_{\FS_d\wr \FS_n}^{\FS_{dn}}\triv).
$$ 
\end{conjecture}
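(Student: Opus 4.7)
The plan is to reduce Foulkes' Conjecture \ref{foulkesconj} to the surjectivity half of the Hadamard-Howe conjecture, and then work on that surjectivity directly.  As identified in the discussion preceding the conjecture, the weight-zero component $h_{d,n:0}$ is precisely an $\FS_{dn}$-equivariant map from $\Ind_{\FS_n\wr \FS_d}^{\FS_{dn}}\triv$ to $\Ind_{\FS_d\wr \FS_n}^{\FS_{dn}}\triv$, obtained from $h_{d,n}\colon S^d(S^nV)\ra S^n(S^dV)$ by restriction to the $\fsl(V)$-weight zero subspace; moreover every irreducible $\GL(V)$-submodule of $S^d(S^nV)$ meets this subspace non-trivially, so surjectivity of $h_{d,n}$ for $d\geq n$ would immediately yield the multiplicity inequality asserted in Conjecture \ref{foulkesconj} for every partition $\pi$ of $dn$.

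Concretely, I would first try to write down an explicit combinatorial section of $h_{d,n}$ on generators: given a decomposable element $\ell_1^n\cdots \ell_d^n\in S^n(S^dV)$, exhibit a canonical preimage in $S^d(S^nV)$ built by a polarization-and-symmetrization recipe, and then verify that the recipe extends linearly, via $\GL(V)$-equivariance, to a right inverse on a spanning set of $S^n(S^dV)$. Second, I would invoke McKay's propagation result \cite{MR2394689}, which states that once $h_{d_0,n}$ is surjective, so is $h_{d,n}$ for every $d\geq d_0$.  Combined with Brion's asymptotic surjectivity \cite{MR1601139}, this would reduce the whole conjecture to a single critical degree, namely $d=n$, the boundary where Hadamard-Howe first asserts surjectivity.

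The hard part is unquestionably this boundary case $d=n$.  There, even the much weaker question of whether the single $\GL(V)$-module $S_{(n^n)}V$ lies in the image of $h_{n,n}$ is the content of Conjecture \ref{kumarconj}, which the main theorem (Theorem \ref{kconjequiv}) of this paper will show to be equivalent to the Alon-Tarsi Conjecture \ref{ATconj} (via Theorem \ref{AT=HR}).  So the plan as written reduces Foulkes in all degrees $d\geq n$ to a family of statements that contains Alon-Tarsi as a subproblem, and I therefore expect it to fall short of a complete proof by an amount exactly measured by the known difficulty of Alon-Tarsi.  The realistic deliverable is instead a clean reduction of the general Foulkes inequality to boundary Hadamard-Howe-type questions at $d=n$, together with McKay's propagation handling $d>n$.
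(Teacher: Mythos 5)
This statement is an \emph{open conjecture} (Foulkes, 1950), cited but not proved in the paper; the authors explicitly remark that it ``is still open in general.'' There is therefore no proof in the paper against which to compare your proposal, and you yourself concede at the end that your plan ``falls short of a complete proof.'' A review of the conjecture should not attempt a proof at all; it should record the cited status and known partial results (Manivel's asymptotic result, Brion's surjectivity in large degree, McKay's propagation, and the counterexample $h_{5,5}$).

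Beyond that mismatch of task, there is a concrete gap in the reduction you outline. You propose to use McKay's result to propagate surjectivity of $h_{d,n}$ downward to ``a single critical degree, namely $d=n$,'' and to treat $d=n$ as the base case. But McKay propagates \emph{upward}: surjectivity of $h_{d,n}$ implies surjectivity of $h_{d',n}$ for $d'>d$. The natural base case for this propagation would indeed be $d=n$ --- except that the paper itself records that $h_{5,5}$ is not injective, and since $h_{n,n}\colon S^n(S^nV)\to S^n(S^nV)$ is an endomorphism of a fixed finite-dimensional space (for $\dim V\geq n^2$), failure of injectivity is failure of surjectivity. So the base case $d=n$ is \emph{false} for $n=5$, and McKay's theorem gives nothing from it. Any reduction along these lines would have to identify, for each $n$, a genuinely larger $d_0(n)>n$ at which surjectivity first holds, and handle $n<d<d_0(n)$ by other means; that is a much harder and currently open problem. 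Also note that the ``explicit combinatorial section'' you ask for in step one is essentially equivalent to the surjectivity you are trying to prove, so as written that step is circular rather than a reduction. Finally, a small point of precision: Conjecture \ref{kumarconj} asserts $S_{(d^n)}V\not\subset\ker h_{d,n}$ for all $d,n$, not just $d=n$; what Theorem \ref{kconjequiv} ties to Alon--Tarsi is the square case $d=n$.
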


Conjecture \ref{foulkesconj}
  was shown to hold   asymptotically by L. Manivel in \cite{MR1651092},
in the sense that  
  for any partition $\mu$, the multiplicity of the partition 
$(dn-|\mu |, \mu)$ is the same in $S^d(S^nV)$ and $S^n(S^dV)$ as soon as
$d$ and $n$ are    at least $|\mu|$.
Conjecture \ref{foulkesconj}  is still open in general. However, the map $h_{5,5:0}$ was shown
not to be injective in \cite{MR2172706}, and thus $h_{5,5}$ is not injective.
The $\GL(V)$-module structure of the kernel of $h_{5,5}$ was determined
by C. Ikenmeyer and S. Mrktchyan as part of a 2012 AMS MRC program:
\begin{proposition}[Ikenmeyer and  Mkrtchyan, unpublished]  
  The kernel of $h_{5,5} :S^5(S^5\BC^5)\ra S^5(S^5\BC^5)$ consists of irreducible modules corresponding to the following partitions:
\begin{align*}\{
&(14,7,2,2), (13,7,2,2,1), (12,7,3,2,1), (12,6,3,2,2),\\
& (12,5,4,3,1), (11,5,4,4,1) ,(10,8,4,2,1) ,(9,7,6,3)\}.
\end{align*}
All these occur with multiplicity one in the kernel, but not all occur with multiplicity one in $S^5(S^5\BC^5)$. 
In particular, the kernel is not an isotypic component.
\end{proposition}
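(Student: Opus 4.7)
The plan is to reduce the determination of $\ker h_{5,5}$ to a concrete computation in the representation theory of $\FS_{25}$, then carry it out on a computer. Since $h_{5,5}$ is a $\GL(V)$-equivariant endomorphism of $S^5(S^5V)$, Schur's lemma says the kernel is a sum of full isotypic components: for each partition $\pi$ of $25$ with at most $5$ parts, the restriction of $h_{5,5}$ to the $S_\pi V$-isotypic component is $h_\pi \ot \Id_{S_\pi V}$ for some linear endomorphism $h_\pi$ of the multiplicity space $\tmult(S_\pi V, S^5(S^5V))$. So the task splits into (i) enumerating the partitions $\pi$ with at most $5$ parts that appear in $S^5(S^5V)$, together with their multiplicities, and (ii) for each such $\pi$, computing $\dim \ker h_\pi$.

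For the computation it is much cheaper to work with the $\fsl(V)$-weight-zero version $h_{5,5:0}$ described above. Temporarily enlarging $V$ to have dimension $25$ (which, on isotypic components indexed by partitions of at most $5$ parts, only adds outside components and preserves the $\GL_5$-structure relevant to us), Schur-Weyl duality identifies the $S_\pi V$-isotypic component of $S^5(S^5V)$ with the $[\pi]$-isotypic component of the $\FS_{25}$-module $\Ind_{\FS_5\wr \FS_5}^{\FS_{25}} \triv$. Under this identification $h_\pi$ becomes the restriction of $h_{5,5:0}$ to that isotypic component. The multiplicity of $[\pi]$ in $\Ind_{\FS_5\wr \FS_5}^{\FS_{25}}\triv$ is computable from character inner products (equivalently, from the plethysm $s_5[s_5]$), so step (i) is standard.

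For step (ii) I would realize each multiplicity space explicitly. One way: enumerate semistandard tableaux (or, in the weight-zero picture, highest weight vectors inside $\Ind_{\FS_5\wr\FS_5}^{\FS_{25}}\triv$ of shape $\pi$) to produce an explicit basis of $\Hom_{\FS_{25}}([\pi], \Ind_{\FS_5\wr\FS_5}^{\FS_{25}}\triv)$. Apply $h_{5,5:0}$ to each basis vector, re-express the image in the same basis by pairing with the (already known) dual basis of the target, and read off $\dim\ker h_\pi$ from the rank of the resulting matrix. Doing this for every $\pi$ with at most $5$ parts yields the list of eight partitions in the statement; the fact that $\dim \ker h_\pi = 1$ in every case but $\tmult(S_\pi V, S^5(S^5V)) > 1$ for some $\pi$ in the list gives the final assertion.

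The main obstacle is purely computational: the ambient induced module has dimension $25!/((5!)^5\,5!)$, far too large for a direct linear-algebra attack, so the reduction to isotypic components and the choice of a small explicit basis in each multiplicity space is essential. In practice this would be done with a computer algebra system (e.g.\ Sage, LiE, or a tailored implementation using Young's orthogonal form for $\FS_{25}$), with independent verification by computing the trace of $h_\pi$ via a character-theoretic formula and by using the symmetry between $h_{d,n}$ and $h_{n,d}$, which forces $h_{5,5}$ to be self-adjoint and hence $\tmult(S_\pi V, \Im h_{5,5}) = \tmult(S_\pi V, S^5(S^5V)) - \dim\ker h_\pi$ on each isotypic component.
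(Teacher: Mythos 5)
Your opening sentence---``Schur's lemma says the kernel is a sum of full isotypic components''---is false, and it directly contradicts the final assertion of the very proposition you are proving, namely that the kernel is \emph{not} an isotypic component. Schur's lemma gives only that $h_{5,5}$ restricted to the $S_\pi V$-isotypic component has the form $h_\pi\otimes \Id_{S_\pi V}$ for some endomorphism $h_\pi$ of the multiplicity space, so that $\ker h_{5,5}=\oplus_\pi (\ker h_\pi)\otimes S_\pi V$. Each summand is an isotypic \emph{submodule}, but it is the full isotypic component only when $h_\pi=0$; in general $0<\dim\ker h_\pi<\tmult(S_\pi V, S^5(S^5V))$ is possible, and the proposition tells you this actually happens here. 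Your own closing sentence (``$\dim\ker h_\pi=1$ in every case but $\tmult(S_\pi V, S^5(S^5V))>1$ for some $\pi$'') uses exactly this, so the error is local to the opening and does not propagate, but it should be corrected since as written the first sentence would make the final clause of the proposition impossible.

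With that fixed, there is little in the paper to compare against: the statement is attributed to Ikenmeyer and Mkrtchyan as unpublished work and no proof is supplied. Your reduction---pass to the $\fsl(V)$-weight-zero map $h_{5,5:0}$ on $\Ind_{\FS_5\wr\FS_5}^{\FS_{25}}\triv$, compute multiplicities of each $[\pi]$ by plethysm, and compute $\mathrm{rank}\,h_\pi$ on each multiplicity space, cross-checked via the self-adjointness coming from the $h_{d,n}$/$h_{n,d}$ duality---is the natural and essentially forced strategy, and the observation that enlarging $V$ to dimension $25$ does not change the components indexed by partitions with at most five parts is correct. But the proposal is a plan for a computation, not a completed one: until the matrices $h_\pi$ are actually written down and their ranks verified, the list of eight partitions and their kernel multiplicities is not established.
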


\subsection{Integration over $\SU(n)$}\label{integral}
Let $d\mu$ denote the Haar measure on $\SU(n)$ with volume one.
Let $W$ be any $\SU(n)$-module and let $W^{\SU(n)}$ be its subspace of invariants.
Consider the $\SU(n)$-module projection map $\pi: W\ra W^{\SU(n)}$. Then, the projection $\pi$ is explicitly realized as the integration: 
$$
\int_{\SU(n)} : W\ra W^{\SU(n)}\,, \,\,\,
v\mapsto 
\int_{g\in \SU(n)} g\cdot v \,d\mu  .$$

Assume further that $ W^{\SU(n)}$ is one dimensional. Take the unique (up to a scalar multiple)  nonzero element in the dual space $P\in {W^*}^{\SU(n)}$. Then, 
\be\label{int}\pi(v)\neq 0 \iff \langle P,  \int_{g\in \SU(n)} g\cdot v \,d\mu\rangle  \neq 0 \iff \langle P, v\rangle\neq 0.
\ene
In particular, take the vector space $V=\mathbb{C}^n$. Then,  $\End (V)$ is a $\GL(V)$ module under the left multiplication.    
For $\d=dn$ (for any $d\geq 0$),  there is a unique 
(up to scale) $\SL(V)$-invariant in $S^\d(\End (V))$, namely $\tdet_n^{\ot d}$ and there is none otherwise, see e.g., \cite[Thm. 5.6.7]{GoodmanWallach}.
We denote  $\tdet_n \in S^n(\End (V))$ by $\tdet_n^V$. Similarly, since $\End (V)$ is canonically isomorphic with the dual $\End (V)^*$,
we can think of $\tdet_n \in S^n(\End (V)^*)$. To distinguish, when
thinking  of $\tdet_n \in S^n(\End (V)^*)$, we denote it by 
$\tdet_n^{V^*}$.

These integrals have been extensively studied in the free probability and 
mathematical  physics literature, see, e.g., 
\cite{MR1959915,MR2217291}.
Despite this, the integrals that arose in our study do not appear to be known.

\subsection{The equivalences}
 
The following is the main result of this note.
\begin{theorem}\label{kconjequiv} Fix $n$ even. Let $V=\mathbb{C}^n$ and write $\End (V)=\Mat_n$, where $\Mat_n$ is the space of $n\times n$ matrices. Let $d\mu$ denote the Haar measure
on $\SU(n)$ and let $\SU(n)$ act on $\End (V)$  by left multiplication. Write $g^i_j$ for the coordinate
functions on $\End (V)$.  The following are equivalent:

(a) The Alon-Tarsi conjecture for $n$.

(b) Conjecture \ref{kumarconj} for $n$  with $d=n$.

(c) $\int_{g\in \SU(n)} g\cdot (\tperm_n^{V^*} )^n  d\mu\neq 0$.

(d)
$
\langle (\tperm_n^{V^*})^n,(\tdet_n^{V})^n\rangle \neq 0
$.

(e) $\int_{g \in \SU(n)} \bigl(\Pi_{1\leq i,j\leq n}g^i_j \bigr) d\mu\neq 0$.

(f) $\langle \Pi_{i,j}g^i_j, (\tdet_n^{V})^n\rangle\neq 0$.
\end{theorem}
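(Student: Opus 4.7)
The plan is to split the six conditions into two groups: (c), (d), (e), (f) are equivalent via invariant theory, while connecting them to (a) and (b) requires combinatorial and representation-theoretic input.

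First I would identify the one-dimensional $\SU(n)$-invariant subspace. By the cited classical result of Goodman--Wallach, the $\SL(V)$-invariants in $S^{n^2}(\End(V))$ under left multiplication are spanned by $(\tdet_n^V)^n$, and dually the invariants in $S^{n^2}(\End(V)^*)$ are spanned by $(\tdet_n^{V^*})^n$; since $\SU(n)$ is Zariski-dense in $\SL(n,\mathbb{C})$, the $\SU(n)$-invariants agree. Applying equation (\ref{int}) with $W = S^{n^2}(\End(V)^*)$ and $P = (\tdet_n^V)^n$ to the vectors $(\tperm_n^{V^*})^n$ and $\prod_{i,j} g^i_j$ immediately yields (c) $\Leftrightarrow$ (d) and (e) $\Leftrightarrow$ (f).

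Next I would prove (a) $\Leftrightarrow$ (f) by direct expansion. Writing $(\tdet_n^V)^n$ via the Leibniz formula in the basis of $\End(V)$ dual to $\{g^i_j\}$ and pairing with $\prod_{i,j} g^i_j$, the only contributions come from tuples $(\tau_1, \ldots, \tau_n)$ of permutations for which the array $L_{ki} := \tau_k(i)$ is a Latin square, each weighted by $\prod_k \operatorname{sgn}(\tau_k)$. This is the row-sign version of the Latin-square count, and by Theorem \ref{AT=HR} together with the sign-preserving (for $n$ even) transposition symmetry on Latin squares, it is nonzero iff Alon-Tarsi holds.

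For (d) $\Leftrightarrow$ (f) (equivalently (c) $\Leftrightarrow$ (e)), I would expand $(\tperm_n^{V^*})^n = \sum_{(\sigma_1,\ldots,\sigma_n)} \prod_{k,i} g^i_{\sigma_k(i)}$ and pair termwise with $(\tdet_n^V)^n$. Only tuples whose associated multi-index $(a_{ij})$ has row and column sums both equal to $n$ can match a monomial of $(\tdet_n^V)^n$; the Latin-square tuples reproduce (f) exactly, while the remaining semi-magic tuples reorganize via the right-$\FS_n$-invariance of $\tperm_n$ under column permutations (combined with $\operatorname{sgn}^n = 1$ for $n$ even) into nonzero constant multiples of the same Latin-square sum. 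Hence both projections vanish together.

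Finally, for (b), I would identify the $S_{(n^n)}V$-isotypic component of $S^n(S^n V)$ (for $\dim V = n$) with the one-dimensional space of $\SL(V)$-semi-invariant polynomials of weight $\tdet^n$ on $S^n V^*$. By Lemma \ref{hadlem}, such a semi-invariant lies in $\ker h_{n,n}$ iff it vanishes on the Chow variety; restricted to products $\ell_1 \cdots \ell_n$ of linear forms, the canonical semi-invariant equals $\tdet(\ell_i^j)^n$, which as an element of the coordinate ring of the normalization expands into the same row-sign Latin-square sum as in the computation for (f). The main obstacle will be this last identification: rigorously establishing that polynomial extendability of $\tdet(\ell_i^j)^n$ from normalized Chow back to $S^n V^*$ is obstructed exactly by the vanishing of the Alon-Tarsi number, yielding (b) $\Leftrightarrow$ (f).
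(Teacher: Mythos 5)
The outer layer of your proposal matches the paper: you get $(c)\Leftrightarrow(d)$ and $(e)\Leftrightarrow(f)$ exactly as the paper does, by invoking identity~\eqref{int} with $W=S^{n^2}(\End(V)^*)$ and the uniqueness of the $\SL(V)$-invariant (and hence $\SU(n)$-invariant) $(\tdet_n^{V})^n$. Your direct expansion for $(a)\Leftrightarrow(f)$ is also sound: pairing $\prod_{i,j}g^i_j$ with $(\tdet_n^V)^n$ picks out precisely the tuples $(\tau_1,\ldots,\tau_n)$ whose array $L_{ki}=\tau_k(i)$ is a Latin square, weighted by the row sign, and transposition plus Theorem~\ref{AT=HR} converts this to the Alon--Tarsi statement. (The paper instead proves $(a)\Leftrightarrow(b)$ directly via \eqref{Pexpr}, but your version is a legitimate variant.)

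The real gap is your claim that $(d)\Leftrightarrow(f)$ follows by ``reorganizing'' the semi-magic tuples. It does not. Expanding, $\langle(\tperm_n^{V^*})^n,(\tdet_n^V)^n\rangle=\sum_m \kappa_m\,N_m\,D_m$, where $m$ ranges over $n\times n$ matrices of nonnegative integers with all row and column sums $n$, $\kappa_m>0$ is a normalization constant, $N_m=|\{\vec\sigma:\,m(\vec\sigma)=m\}|\geq 0$, and $D_m=\sum_{\vec\tau:\,m(\vec\tau)=m}\prod_k\operatorname{sgn}(\tau_k)$. The Latin-square term is the $m=(1,\ldots,1)$ summand, which is $(f)$. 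But there are many other summands that do not reduce to it: for $m=n\cdot\mathrm{Id}$, the only contributing tuples are $\vec\sigma=\vec\tau=(\mathrm{Id},\ldots,\mathrm{Id})$, giving $\kappa_{n\mathrm{Id}}\cdot 1\cdot 1\neq 0$, a term that is \emph{not} a multiple of the Latin-square sum and does not cancel against anything by column-permutation symmetry. So the two pairings are genuinely different numbers, and equality (or even simultaneous vanishing) of $(d)$ and $(f)$ cannot be read off term-by-term. The paper bridges this gap with a different mechanism: it computes $\int_{\SU(n)}g\cdot(e_1\cdots e_n)^n\,d\mu$ explicitly (identity~\eqref{eqn5}), observes that this integral equals $\alpha P^*$ where $P^*$ is the unique $\SU(n)$-invariant in $S^n(S^nV)$ (identity~\eqref{eqn6}), and then reads off that $\int(\tperm)^n d\mu$ and $\int\prod g^i_j\,d\mu$ are the coefficients of the two monomials $(e_1\cdots e_n)^n$ and $(e_1)^n\cdots(e_n)^n$ in $\alpha P^*$. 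Since $P^*$ has nonzero $(e_1)^n\cdots(e_n)^n$ coefficient unconditionally (by the proof of Proposition~\ref{theinvaris}), and nonzero $(e_1\cdots e_n)^n$ coefficient exactly when Alon--Tarsi holds, all of $(b),(c),(e)$ collapse to the single statement $\alpha\neq 0$. This ``one invariant vector, read off two coefficients'' idea is the crux of the proof, and it is absent from your proposal.

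Your sketch of $(b)\Leftrightarrow(f)$ is closer but also imprecise: restricted to the Chow locus, the covariant $P$ does not literally equal $\det(\ell_i^j)^n$, but rather $c\cdot\det(\ell_i^j)^n$ where $c$ is exactly the (column-)sign Latin-square sum (this is what \eqref{Pexpr} with $v^i_j=e_i$ computes); the content of the equivalence is whether $c$ vanishes, which is the Alon--Tarsi number, and your final sentence acknowledges this is the ``main obstacle'' but does not resolve it. I would recommend reorganizing around the paper's strategy: prove $(a)\Leftrightarrow(b)$ directly via \eqref{Pexpr} and Lemma~\ref{hadlem}, then establish \eqref{eqn5}--\eqref{eqn6}, and use the nonvanishing of two specific coefficients of $P^*$ to link $(b)$ with $(c)$ and $(e)$.
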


The pairings in (d) and (f)
  may also be thought of as a pairing between homogeneous polynomials of degree $n^2$ and homogeneous differential operators
of order $n^2$.

Rectangular versions of these equivalences can be formulated as well.
\smallskip

\subsection{Motivation from geometric complexity theory}
In geometric complexity theory, see \cite{MS1,MS2,MR2861717,MR3093509}, one 
looks for modules that are   in the  ideal of  
the orbit closure $\ol{\GL_{n^2}\cdot \tdet_n}\subset S^n(\End (V)^*)$ 
of the determinant polynomial.  One approach to this search is
to find    modules  in $\Sym(S^n(\End (V)))$ that  do not occur in the coordinate
ring of the orbit $\GL_{n^2}\cdot \tdet_n$, which can in principle be determined
from representation theory, see \cite{MR2861717}.
The following observations are from  \cite{kumarcoordring} (where they
are explained in detail):
Since  $\Ch_n(\End (V)^*)\subset \ol{\GL_{n^2}\cdot \tdet_n}$,    any polynomial not in the
ideal of $\Ch_n(\End (V)^*)$ cannot be in the ideal of $\ol{\GL_{n^2}\cdot \tdet_n}$.
Thus, 
if $S_{(n^d)}(\BC^{n^2} )\not\subset I(\Ch_n(\BC^{n^2}))$, for all $1\leq d\leq n$,    then 
for any partition $\pi$  with at most $n$ parts, 
  the module
$S_{n\pi}\BC^{n^2}$ occurs at least once in $\BC[\ol{\GL_{n^2}\cdot \tdet_n}]$;
in particular, the symmetric Kronecker coefficient $s_{n\pi,  d^n, d^n}$ is non-vanishing (cf. 
\cite[$\S$ 6]{kumarcoordring}).

\section{Construction of the invariant}  
Let $V=\mathbb{C}^d$  and let  $\O\in \La dV^*$ be non-zero. Then, for any even $n$,  the 
one-dimensional module  $S_{(n^d)}V^*$ occurs with multiplicity one  in $S^d(S^nV^*)$ (cf. \cite[Proposition 4.3]{MR983608}). Write $\ol{\O}$ when 
considering
  $\O$ as a multi-linear form on $V$, and write $\O$ when using it as an element
of the dual space  $\La dV^*$ to $\La dV$.

\begin{proposition}\label{theinvaris} Let $n$ be even. 
Choosing the scale appropriately,  the unique (up to scale)
 polynomial $P\in S_{(n^d)}V^*\subset S^d(S^n V^*)$ evaluates on 
 $$x=(v^1_1\cdots v^1_n)(v^2_1\cdots v^2_n)\cdots (v^d_1\cdots v^d_n)\in S^d(S^nV), \,\,\,\text{for any}\,\, v^i_j\in V,
$$
 to give
\be\label{Pexpr}
\langle P,x\rangle=\sum_{\s_1\hd \s_d\in \FS_n}\ol{\O}(v^1_{\s_1(1)}\hd v^d_{\s_d(1)})
\cdots \ol{\O}(v^1_{\s_1(n)}\hd v^d_{\s_d(n)}).
\ene
\end{proposition}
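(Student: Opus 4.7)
The plan is to check directly that the right-hand side of \eqref{Pexpr} extends to an $\SL(V)$-invariant element of $S^d(S^nV^*)$, and that it does not vanish identically; since the one-dimensional module $S_{(n^d)}V^*$ is characterized inside $S^d(S^nV^*)$ as the unique line of $\SL(V)$-invariants (using the multiplicity-one assertion quoted from \cite[Proposition 4.3]{MR983608}), this identifies the formula with $P$ up to the implicit choice of scale.

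First I would define a multilinear functional $\tilde F\colon V^{\otimes dn}\to \mathbb{C}$ by taking the right-hand side of \eqref{Pexpr} as $\tilde F(v^1_1,\ldots, v^d_n)$ with the $v^i_j$ treated as independent vectors. I would then verify two symmetries that together allow $\tilde F$ to descend through the projection $V^{\otimes dn}\twoheadrightarrow S^d(S^nV)$. The first is that for each fixed $i$, $\tilde F$ is symmetric in $v^i_1,\ldots, v^i_n$: a permutation of these inputs is absorbed by re-indexing the summation variable $\sigma_i\in \FS_n$. The second is that $\tilde F$ is symmetric under permutation of the groups $v^i_\bullet$: swapping the $i$th and $j$th groups permutes two entries in every one of the $n$ factors $\bar\Omega(\cdots)$, so by the alternating property of $\bar\Omega$ the sum is multiplied by $(-1)^n=1$ since $n$ is even. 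This yields a well-defined element, still denoted $P$, of $S^d(S^nV^*)$.

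Next I would verify $\SL(V)$-invariance. Replacing each $v^i_j$ by $g v^i_j$ scales each of the $n$ factors $\bar\Omega$ by $\det g$, hence scales the whole sum by $(\det g)^n$; equivalently, under the pullback action of $\GL(V)$ on $S^d(S^nV^*)$ we have $g\cdot P=(\det g)^{-n}P$. Among partitions of $dn$ with at most $d=\dim V$ parts, the only one that is a scalar multiple of $(1^d)$ is $(n^d)$, so the $\SL(V)$-invariants in $S^d(S^nV^*)$ lie in the $S_{(n^d)}V^*$-isotypic component, which is one-dimensional by the quoted multiplicity-one result. Consequently $P$ is either zero or a nonzero scalar multiple of the generator of $S_{(n^d)}V^*$.

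To rule out vanishing and fix a specific non-zero value (thereby fixing the scale), I would evaluate on a distinguished tensor: pick a basis $e_1,\ldots, e_d$ of $V$ with $\bar\Omega(e_1,\ldots, e_d)=1$ and set $v^i_j=e_i$ for all $i,j$. Then each of the $(n!)^d$ summands equals $\bar\Omega(e_1,\ldots, e_d)^n=1$, giving $\langle P, x\rangle=(n!)^d\neq 0$, completing the identification of the formula with the generator of $S_{(n^d)}V^*$ up to scalar. The only non-routine step is the second symmetry above, which is precisely where the hypothesis that $n$ is even is used; everything else is bookkeeping.
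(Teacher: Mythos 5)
Your proof is correct and takes essentially the same approach as the paper: define the multilinear functional by the right-hand side, check it descends to $S^d(S^nV^*)$ (symmetry within each group and across groups, the latter producing the sign $(-1)^n=1$), verify $\SL(V)$-invariance from the determinant factor, and rule out vanishing by evaluating at $v^i_j=e_i$. Your treatment of the within-group symmetry (re-indexing $\sigma_i$ by left composition with the transposition) is a slightly cleaner version of the paper's argument, which first reduces to $\sigma_1=\Id$ and then right-composes each $\sigma_s$ by $(1,2)$; otherwise the two proofs coincide.
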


\begin{proof} Let $\bar{P}\in  (V^*)^{\otimes nd}$ be defined by the identity \eqref{Pexpr} (with $P$ replaced by $\bar{P}$). It suffices to check that  

(i) $\bar{P}\in S^d(S^nV^*)$,

 (ii) $\bar{P}$ is $\SL(V)$ invariant, 
 and

(iii) $\bar{P}$ is not identically zero. 

Observe that (iii) follows from  the identity \eqref{Pexpr} by taking $v^i_j=e_i$ where $e_1\hd e_d$ is the standard  basis of $V$, and (ii) follows because $\SL(V)$ acts trivially
on $\Omega$.

\smallskip

To see (i), we show (ia) $\bar{P}\in S^d((V^*)^{\ot n})$ and (ib) $\bar{P}\in (S^nV^*)^{\ot d}$ to conclude.
To see (ia), it is sufficient to show that exchanging  two adjacent factors in parentheses in the expression of $x$
will not change \eqref{Pexpr}. Exchange   $v^1_j$ with $v^2_j$ in the expression for
$j=1\hd n$.
Then, each individual determinant will change sign, but there are an even number of determinants, so the right hand side
of \eqref{Pexpr} is unchanged. To see (ib),  it is sufficient to show the expression is unchanged if we swap
$v^1_1$ with $v^1_2$ in \eqref{Pexpr}. If we multiply by $n!$, we may assume $\s_1=\Id$, i.e., 
\begin{align*}&
\langle \bar{P},x\rangle=\\
&n!\sum_{\s_2\hd \s_d\in \FS_n}\ol{\O}(v^1_1,v^2_{\s_2(1)}\hd v^d_{\s_d(1)})
\ol{\O}(v^1_2,v^2_{\s_2(2)}\hd v^d_{\s_d(2)})
\cdots \ol{\O}(v^1_n,v^2_{\s_2(n)}\hd v^d_{\s_d(n)}).
\end{align*}
With the two elements $v^1_1$ and $v^1_2$ swapped, we get
\be \label{P2}
 n!\sum_{\s_2\hd \s_d\in \FS_n}\ol{\O}(v^1_2,v^2_{\s_2(1)}\hd v^d_{\s_d(1)})
\ol{\O}(v^1_1,v^2_{\s_2(2)}\hd v^d_{\s_d(2)})
\cdots \ol{\O}(v^1_n,v^2_{\s_2(n)}\hd v^d_{\s_d(n)}).
\ene
Now right compose each $\s_s$ in \eqref{P2} by the transposition $(1,2)$. The expressions become the same. 
\end{proof}

\section{Proof of the equivalences in Theorem \ref{kconjequiv}}
Let $V=\mathbb{C}^n$ with the standard basis $\{e_1, \dots, e_n\}$. The equivalences (c)$\Leftrightarrow$  (d) and (e) $\Leftrightarrow$ (f)  follow from the identity (\ref{int}) applied to the $\SU(n)$-module $W=S^{n^2}(\End (V)^*)$. (To prove the equivalence (e) $\Leftrightarrow$ (f), we have used the fact that any $\SU(n)$-invariant polynomial $Q\in S^{n^2}(\End (V)^*)$ is non-zero if and only if it does not vanish at $\Id \in \End (V)$.)

We now prove the other equivalences. We have two natural bases of 
$S^n(S^nV)_0$  to work with, the {\it monomial basis} consisting
of products  in the $e_j$ such that the $\fsl(V)$-weight of the expression
is zero,  and a     {\it weight basis}. To obtain a weight basis, 
first decompose $S^n(S^nV)$ into irreducible
$\GL(V)$-modules and then take a basis of the $\fsl(V)$-weight zero subspace of
each module. A  weight basis is the collection of the vectors in these spaces. (Observe that this basis is not unique.) 
The polynomial $P\in S_{(n^n)}(V^*)\subset S^n(S^n(V^*))$ will have a non-zero evaluation on 
$(e_1\cdots e_n)^n$ (equivalently, not be in the ideal of
the Chow variety)  if and only if, when expanding $P$ in the monomial basis 
obtained from the basis  $y_1\hd y_n$ dual  to $e_1\hd e_n$,
the coefficient of $(y_1\cdots y_n)^n$ is non-zero.

To see (a)$\Leftrightarrow$(b) (which was already shown in \cite{kumarcoordring}),
by the identity (\ref{Pexpr}) for $d=n$, 
\be \label{pairing}
\langle P,(e_1\cdots e_n)^n\rangle=\sum_{\s_1\hd \s_n\in \FS_n}\ol{\O}(e_{\s_1(1)}\hd e_{\s_n(1)})
\cdots \ol{\O}(e_{\s_1(n)}\hd e_{\s_n(n)}).
\ene
A term in the summation is non-zero if and only if the permutations $\s_1\hd \s_n$ give
rise to a Latin square by putting the values of $\sigma_i$ in the $i$-th row, and the contribution of the term is the column sign of the
square. This proves the equivalence
of (a) and (b) by Lemma \ref{hadlem}.

Now, 
\begin{align} \label{eqn5}
 \int_{g\in \SU(n)} g\cdot (e_1\cdots e_n)^n d\mu
&=
\int_{g\in \SU(n)}   ((g\cdot e_1)\cdots  (g\cdot e_n))^n d\mu \notag\\
&=\sum_{1\leq i^p_q\leq n}\,\int_{g\in \SU(n)}   (g_1^{i^1_1}\cdots  g_n^{i^1_n})
\cdots (g_1^{i^n_1}\cdots  g_n^{i^n_n}) (e_{i^1_1}\cdots e_{i^1_n})
\cdots  (e_{i^n_1}\cdots e_{i^n_n})
 d\mu \notag\\
 &=\sum_{1\leq i^p_q\leq n}\,[\int_{g\in \SU(n)}   (g_1^{i^1_1}\cdots  g_n^{i^1_n})
\cdots (g_1^{i^n_1}\cdots  g_n^{i^n_n})
d\mu]
 (e_{i^1_1}\cdots e_{i^1_n})
\cdots  (e_{i^n_1}\cdots e_{i^n_n}) \notag\\
&=[\int_{g\in \SU(n)} \sum_{\{ i^j_1\hd i^j_n\}=[n] \forall j}  (g_1^{i^1_1}\cdots  g_n^{i^1_n})
\cdots (g_1^{i^n_1}\cdots  g_n^{i^n_n})
d\mu]
 (e_{1}\cdots e_{n})^n
+ \notag\\
&
[\int_{g\in \SU(n)} \sum_{\s \in \FS_n: \{ i^j_1\hd i^j_n\} =\s(j)}  (g_1^{i^1_1}\cdots  g_n^{i^1_n})
\cdots (g_1^{i^n_1}\cdots  g_n^{i^n_n})
d\mu]
 (e_{1})^n
\cdots  ( e{_n})^n + x \notag\\
&=[\int_{g\in \SU(n)} (\tperm(g))^n
d\mu]
 (e_{1}\cdots e_{n})^n
+\notag\\
&
[\int_{g\in \SU(n)} (\Pi_{1\leq i,j\leq n}g^i_j)
d\mu]
 (e_{1})^n
\cdots  ( e{_n})^n +x,
\end{align}
where $x\in S^n(S^n(V))_0$ is in the span of the monomial basis not involving $(e_{1}\cdots e_{n})^n$ and $ (e_{1})^n
\cdots  ( e{_n})^n$.

Consider   the projection $
\int_{\SU(n)} : W\ra W^{\SU(n)}$ as in \S\ref{integral} for the $\SU(n)$-module  $S^n(S^n(V))$ and  for the unique $\SU(n)$-invariant $P^*\in S^n(S^n(V))$. It implies 
\begin{equation} \label{eqn6} \int_{g\in \SU(n)} g\cdot (e_1\cdots e_n)^n d\mu = \a P^*, \,\,\,\text{for some}\,\, \a \in \mathbb{C}.
\end{equation}

From this, together with the identity  (\ref{int}), we get the equivalence of (b) and the non-vanishing of $\int_{g\in \SU(n)} g\cdot (e_1\cdots e_n)^n d\mu$. Thus, the identity (\ref{eqn5}) shows that (e) implies (b). Further, assuming
 (b),
  the  identity (\ref{eqn6})    implies  $\int_{g\in \SU(n)} g\cdot (e_1\cdots e_n)^n d\mu$ is a non-zero multiple of $P^*$. But, by the proof of Proposition \ref{theinvaris}, $P^*$   contains the monomial 
$(e_1^n)\cdots (e_n^n)$ with non-zero coefficient. Thus,   identity (\ref{eqn5})
implies  (e). This shows the equivalence of (b) and (e). 

It is easy to see that (c) is equivalent to $\int_{g\in \SU(n)} (\tperm(g))^n
d\mu \neq 0$. Thus, by the identity (\ref{eqn5}) , (c) implies (b). Further, from the identity (\ref{Pexpr}) for $d=n$, it is easy to see that if (a) (equivalently (b)) is true, $P^*$ contains the monomial $ (e_{1}\cdots e_{n})^n$ with non-zero coefficient. Thus, from the identities (\ref{eqn5}) and (\ref{eqn6}), we get that (b) implies the non-vanishing of $\int_{g\in \SU(n)} (\tperm(g))^n
d\mu $, and hence (c). Thus (b) and (c) are equivalent. This proves the theorem. 
\qed

\bibliographystyle{amsplain}
 
\bibliography{Lmatrix.bib}

\def\cdprime{$''$} \def\cprime{$'$} \def\cprime{$'$} \def\cprime{$'$}
  \def\Dbar{\leavevmode\lower.6ex\hbox to 0pt{\hskip-.23ex \accent"16\hss}D}
  \def\cprime{$'$} \def\cprime{$'$} \def\cdprime{$''$} \def\cprime{$'$}
  \def\cprime{$'$} \def\Dbar{\leavevmode\lower.6ex\hbox to 0pt{\hskip-.23ex
  \accent"16\hss}D} \def\cprime{$'$} \def\cprime{$'$} \def\cprime{$'$}
  \def\cprime{$'$} \def\Dbar{\leavevmode\lower.6ex\hbox to 0pt{\hskip-.23ex
  \accent"16\hss}D} \def\cprime{$'$} \def\cprime{$'$}
\providecommand{\bysame}{\leavevmode\hbox to3em{\hrulefill}\thinspace}
\providecommand{\MR}{\relax\ifhmode\unskip\space\fi MR }
\providecommand{\MRhref}[2]{%
  \href{http://www.ams.org/mathscinet-getitem?mr=#1}{#2}
}
\providecommand{\href}[2]{#2}
\begin{thebibliography}{10}

\bibitem{MR1179249}
N.~Alon and M.~Tarsi, \emph{Colorings and orientations of graphs},
  Combinatorica \textbf{12} (1992), no.~2, 125--134. \MR{1179249 (93h:05067)}

\bibitem{MR977186}
S.~C. Black and R.~J. List, \emph{A note on plethysm}, European J. Combin.
  \textbf{10} (1989), no.~1, 111--112. \MR{977186 (89m:20011)}

\bibitem{MR1243152}
Michel Brion, \emph{Stable properties of plethysm: on two conjectures of
  {F}oulkes}, Manuscripta Math. \textbf{80} (1993), no.~4, 347--371.
  \MR{MR1243152 (95c:20056)}

\bibitem{MR1601139}
\bysame, \emph{Sur certains modules gradu\'es associ\'es aux produits
  sym\'etriques}, Alg\`ebre non commutative, groupes quantiques et invariants
  ({R}eims, 1995), S\'emin. Congr., vol.~2, Soc. Math. France, Paris, 1997,
  pp.~157--183. \MR{1601139 (99e:20054)}

\bibitem{MR2861717}
Peter B{\"u}rgisser, J.~M. Landsberg, Laurent Manivel, and Jerzy Weyman,
  \emph{An overview of mathematical issues arising in the geometric complexity
  theory approach to {${\rm VP}\neq{\rm VNP}$}}, SIAM J. Comput. \textbf{40}
  (2011), no.~4, 1179--1209. \MR{2861717}

\bibitem{MR1959915}
Beno{\^{\i}}t Collins, \emph{Moments and cumulants of polynomial random
  variables on unitary groups, the {I}tzykson-{Z}uber integral, and free
  probability}, Int. Math. Res. Not. (2003), no.~17, 953--982. \MR{1959915
  (2003m:28015)}

\bibitem{MR2217291}
Beno{\^{\i}}t Collins and Piotr {\'S}niady, \emph{Integration with respect to
  the {H}aar measure on unitary, orthogonal and symplectic group}, Comm. Math.
  Phys. \textbf{264} (2006), no.~3, 773--795. \MR{2217291 (2007c:60009)}

\bibitem{MR1451417}
Arthur~A. Drisko, \emph{On the number of even and odd {L}atin squares of order
  {$p+1$}}, Adv. Math. \textbf{128} (1997), no.~1, 20--35. \MR{1451417
  (98e:05018)}

\bibitem{MR0037276}
H.~O. Foulkes, \emph{Concomitants of the quintic and sextic up to degree four
  in the coefficients of the ground form}, J. London Math. Soc. \textbf{25}
  (1950), 205--209. \MR{MR0037276 (12,236e)}

\bibitem{MR2646093}
David~G. Glynn, \emph{The conjectures of {A}lon-{T}arsi and {R}ota in dimension
  prime minus one}, SIAM J. Discrete Math. \textbf{24} (2010), no.~2, 394--399.
  \MR{2646093 (2011i:05034)}

\bibitem{GoodmanWallach}
Roe Goodman and Nolan~R. Wallach, \emph{Symmetry, representations, and
  invariants}, Graduate Texts in Mathematics, vol. 255, Springer, Dordrecht,
  2009. \MR{2522486}

\bibitem{MR1554881}
J.~Hadamard, \emph{M\'emoire sur l'\'elimination}, Acta Math. \textbf{20}
  (1897), no.~1, 201--238. \MR{1554881}

\bibitem{hermite}
C.~Hermite, \emph{Sur la theorie des fonctions homogenes a deux indeterminees},
  Cambridge and Dublin Math. J. \textbf{9} (1854), 172--217.

\bibitem{MR983608}
Roger Howe, \emph{{$({\rm GL}_n,{\rm GL}_m)$}-duality and symmetric plethysm},
  Proc. Indian Acad. Sci. Math. Sci. \textbf{97} (1987), no.~1-3, 85--109
  (1988). \MR{MR983608 (90b:22020)}

\bibitem{MR1271866}
Rosa Huang and Gian-Carlo Rota, \emph{On the relations of various conjectures
  on {L}atin squares and straightening coefficients}, Discrete Math.
  \textbf{128} (1994), no.~1-3, 225--236. \MR{1271866 (95i:05036)}

\bibitem{MR3169697}
Harlan Kadish and J.~M. Landsberg, \emph{Padded polynomials, their cousins, and
  geometric complexity theory}, Comm. Algebra \textbf{42} (2014), no.~5,
  2171--2180. \MR{3169697}

\bibitem{kumarcoordring}
Shrawan Kumar, \emph{A study of the representations supported by the orbit
  closure of the determinant}, arXiv:1109.5996.

\bibitem{MR3093509}
\bysame, \emph{Geometry of orbits of permanents and determinants}, Comment.
  Math. Helv. \textbf{88} (2013), no.~3, 759--788. \MR{3093509}

\bibitem{MR2865915}
J.~M. Landsberg, \emph{Tensors: geometry and applications}, Graduate Studies in
  Mathematics, vol. 128, American Mathematical Society, Providence, RI, 2012.
  \MR{2865915}

\bibitem{MR1354144}
I.~G. Macdonald, \emph{Symmetric functions and {H}all polynomials}, second ed.,
  Oxford Mathematical Monographs, The Clarendon Press Oxford University Press,
  New York, 1995, With contributions by A. Zelevinsky, Oxford Science
  Publications. \MR{1354144 (96h:05207)}

\bibitem{MR1651092}
Laurent Manivel, \emph{Gaussian maps and plethysm}, Algebraic geometry
  ({C}atania, 1993/{B}arcelona, 1994), Lecture Notes in Pure and Appl. Math.,
  vol. 200, Dekker, New York, 1998, pp.~91--117. \MR{MR1651092 (99h:20070)}

\bibitem{MR2394689}
Tom McKay, \emph{On plethysm conjectures of {S}tanley and {F}oulkes}, J.
  Algebra \textbf{319} (2008), no.~5, 2050--2071. \MR{2394689 (2008m:20023)}

\bibitem{MR2172706}
Jurgen M{\"u}ller and Max Neunh{\"o}ffer, \emph{Some computations regarding
  {F}oulkes' conjecture}, Experiment. Math. \textbf{14} (2005), no.~3,
  277--283. \MR{MR2172706 (2006e:05186)}

\bibitem{MS1}
Ketan~D. Mulmuley and Milind Sohoni, \emph{Geometric complexity theory. {I}.
  {A}n approach to the {P} vs.\ {NP} and related problems}, SIAM J. Comput.
  \textbf{31} (2001), no.~2, 496--526 (electronic). \MR{MR1861288
  (2003a:68047)}

\bibitem{MS2}
\bysame, \emph{Geometric complexity theory. {II}. {T}owards explicit
  obstructions for embeddings among class varieties}, SIAM J. Comput.
  \textbf{38} (2008), no.~3, 1175--1206. \MR{MR2421083}

\bibitem{MR1328833}
Igor~R. Shafarevich, \emph{Basic algebraic geometry. 1}, second ed.,
  Springer-Verlag, Berlin, 1994, Varieties in projective space, Translated from
  the 1988 Russian edition and with notes by Miles Reid. \MR{MR1328833
  (95m:14001)}

\end{thebibliography}

\end{document}